\definecolor{applegreen}{rgb}{0.55, 0.71, 0.0}
\definecolor{bleudefrance}{rgb}{0.19, 0.55, 0.91}
\theoremstyle{plain}
\newtheorem{thm}{Theorem}[section]
\newtheorem{lem}[thm]{Lemma}
\newtheorem{cor}[thm]{Corollary}
\theoremstyle{definition}
\newtheorem*{rk}{Remark}
\newcommand{\p}{\mathbb{P}}
\newcommand{\arb}{\mathbb{A}}
\newcommand{\warb}{\mathbb{W}}
\newcommand{\abs}[1]{\left\lvert #1 \right\rvert}
\newcommand{\bra}[1]{\left( #1 \right)}
\tikzset{nomorepostaction/.code=\let\tikz@postactions\pgfutil@empty}
\newcommand\nxleftrightarrow[2][]{%
  \mathrel{\tikz[baseline=-.7ex] \path node[slash underlined,draw,<->,anchor=south] {\(\scriptstyle #2\)} node[anchor=north] {\(\scriptstyle #1\)};}}
\title{The wired arboreal gas on regular trees}
\author{Philip Easo \\ \normalsize{DPMMS, University of Cambridge} \\ \normalsize{\href{mailto:pe265@cam.ac.uk}{pe265@cam.ac.uk}}}
\date{\small{\today}}
\renewenvironment{abstract}
 {\small\par\noindent\textbf{\abstractname.}\ }
 {\par\medskip}
\begin{document}
\maketitle

\begin{abstract}
We study the weak limit of the arboreal gas along any exhaustion of a regular tree with wired boundary conditions. We prove that this limit exists, does not depend on the choice of exhaustion, and undergoes a phase transition. Below and at criticality, we prove the model is equivalent to bond percolation. Above criticality, we characterise the model as the superposition of critical bond percolation and a random collection of infinite one-ended paths. This provides a simple example of an arboreal gas model that continues to exhibit critical-like behaviour throughout its supercritical phase.

\end{abstract}

\section{Introduction}
Let $G=(V,E)$ be a finite graph. The arboreal gas is a model for a random spanning subgraph $\omega \in \{ 0,1 \}^E$ that is a \emph{forest}, a graph without cycles. Given a parameter $\beta > 0$, the arboreal gas $\arb_\beta^G$ assigns weight $\arb_\beta^G \bra{ \omega = \eta } = \beta^{\abs{ \eta }} / Z_\beta$ to each forest $\eta \in \{ 0,1 \}^E$ with $\abs{\eta}$ edges, where $Z_\beta$ is the suitable normalising constant. In terms of bond percolation $\p_p^G$, in which every edge in $E$ is independently included in $\omega$ with probability $p \in [0,1]$,
\begin{equation} \label{eq:arb-perc}
	\arb_\beta^G \bra{ \; \cdot \;} = \p_{p_\beta}^G \bra{ \; \cdot \; \mid \omega \text{ is a forest} } \quad \text{with} \quad p_\beta := \frac{\beta}{\beta+1}.
\end{equation}
This model is at the intersection of a number of other important models in discrete probability: the arboreal gas is the weak limit of the $q$-state random cluster model $\phi_{p,q}$ as $q \to 0$ with $p = \beta q$ \cite[Section 1.5]{random-cluster-book}, and taking $\beta = 1$ and $\beta \to \infty$, the arboreal gas becomes the uniform measure on spanning forests and the uniform spanning tree, respectively.

The aboreal gas was studied on the complete graph in \cite{complete1,complete2}. In this setting, the aboreal gas has a phase transition at the same point as the corresponding bond percolation process, and the two models are equivalent in the subcritical phase. However, in the supercritical phase, the following surprising phenomenon occurs: the second largest cluster in the arboreal gas scales like $n^{2/3}$, as in the critical arboreal gas (and critical bond percolation \cite{erdos-renyi1,erdos-renyi2}), whereas the second largest cluster in bond percolation scales like $\log n$, as in subcritical bond percolation. More recently, weak limits of the arboreal gas along exhaustions of $\mathbb Z^d$ were studied in \cite{lattice1,lattice2} by exactly relating the aboreal gas to a non-linear sigma model with hyperbolic target space. Here again, the authors find that in the supercritical phase, the arboreal gas continues to exhibit critical-like behaviour, this time with respect to the  decay of certain truncated two-point functions. They remark that this phenomenon is actually natural from the viewpoint of spontaneously broken continuous symmetries, rather than bond percolation.

The motivation for this paper is to give a transparent example of an arboreal gas model exhibiting this behaviour and to establish it by elementary arguments. Let $T$ be the $k$-regular tree for some $k \geq 3$. Since $T$ is infinite, the arboreal gas on $T$ is not defined a priori. To extend the definition of the arboreal gas to $T$, we take the weak limit along an \emph{exhaustion} $(V_n)$. An exhaustion of $T$ is a growing sequence of finite connected sets of vertices whose union contains every vertex in $T$. For each $n$, construct a finite graph $G_n$ as follows: take the subgraph of $T$ induced by the union of $V_n$ with its neighbours in $T$, then merge these neighbours to a single vertex $\partial_n$. This corresponds to equipping $(V_n)$ with wired boundary conditions. If we used free boundary conditions instead, i.e.\ we did not merge the boundary vertices, then each $G_n$ would be a tree, so the arboreal gas on $G_n$ would simply be bond percolation. The wired arboreal gas on $T$ of parameter $\beta > 0$, denoted $\warb_\beta^T$, is defined to be the weak limit of $\arb_\beta^{G_n}$ as $n \to \infty$.

The goal of this paper is to prove the following theorem characterising $\warb_\beta^T$. Notice $\warb_\beta^T$ undergoes a phase transition at $\beta_c := \frac{1}{k-2}$. This corresponds to the critical parameter $p_c := \frac{1}{k-1}$ for bond percolation in the sense that $p_c = p_{\beta_c}$. In the subcritical and critical phases, $\warb_\beta^T$ is equal to $\p_{p_\beta}^T$, whereas in the supercritical phase, $\warb_\beta^T$ is the superposition of $\p_{p_c}^T$ and a random collection of infinite one-ended paths. In particular, the supercritical clusters are distributed as in critical bond percolation when finite and as the incipient infinite cluster \cite{iic} when infinite. We write $o$ for an arbitrary distinguished vertex in $T$, we write $d$ for the graph distance metric on $T$, and we label the endpoints of each edge $e \in T$ by $e^-$ and $e^+$ with $d(o,e^-) < d(o,e^+)$.

\begin{thm} \label{thm}
	Let $(G_n)$ be the sequence of graphs induced by an exhaustion of a $k$-regular tree $T$ with $k\geq 3$ using wired boundary conditions, and let $\beta >0$.
	\begin{enumerate}[topsep=0pt]
		\item The sequence $(\arb_\beta^{G_n} )_{n =1}^\infty$ converges weakly to a limit $\warb_\beta^T$ that is independent of the choice of exhaustion and is therefore invariant under any graph automorphism of $T$.
		\item When $\beta \leq \frac{1}{k-2}$, we have $\warb_\beta^{T} = \p_{p_\beta}^{T}$.
		\item When $\beta \geq \frac{1}{k-2}$, we can sample $\omega$ according to $\warb_\beta^T$ by the following procedure:
		\begin{enumerate}
			\item Sample a configuration $\omega_0$ on the edges of $T$ according to $\p_{p_c}^T$.
			\item Sample a configuration $\eta$ on the vertices of $T$ such that each vertex $v$ is independently included in $\eta$ with probability $\frac{\beta(k-2)k-k}{\beta(k-2)k-1}$ if $v = o$ and probability $\frac{\beta(k-2)-1}{\beta(k-2)}$ if $v \not=o$.
			\item Let $U$ be the set of vertices $v \in \eta$ such that $v = o$ or there is an edge $e \in T \backslash \omega_0$ with $ e^+ = v$.
			\item For each vertex $v \in T$, independently select a neighbour $s(v)$ with $d(o,s(v)) > d(o,v)$ uniformly at random. Let $\gamma(v)$ denote the edges in the path $v, s(v), s(s(v)), \dots$.
			\item Set
			\[
				\omega = \omega_0 \cup \bigcup_{v \in U} \gamma(v).
			\]
		\end{enumerate}
	\end{enumerate}
\end{thm}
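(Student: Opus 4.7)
The plan is to reduce the finite-volume arboreal gas to a conditional bond percolation on $T$, analyse the conditioning by a tree recursion, and match the resulting limit against the explicit construction of part (3).

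Under the natural bijection between $E(G_n)$ and the edges of $T[B(o,n+1)]$, a configuration on $E(G_n)$ is a forest in $G_n$ iff every cluster of its restriction to $E(B(o,n))$ uses at most one of the edges at depth $n$-to-$(n+1)$. Since the arboreal gas is percolation conditioned on being a forest, this gives
\[
\arb_\beta^{G_n}(\omega \cap F = f) \;=\; \p_{p_\beta}^T(\omega \cap F = f \mid \mathrm{valid}_n),
\]
where $\mathrm{valid}_n$ denotes that event, for any finite $F \subset E(T)$ eventually inside $E(G_n)$. Specialising to the ball exhaustion $V_n = B(o,n)$: each depth-$m$ vertex roots an independent height-$(n-m+1)$ subtree of $B(o,n+1)$. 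Let $p_l(j)$ be the probability that, in such a height-$l$ subtree, the root cluster has exactly $j$ boundary edges \emph{and} every non-root cluster has at most one boundary edge. The joint tracking of non-root validity is essential, since the naive marginal recursion misses the phase transition entirely. A one-level case split gives
\[
p_l(0) = \bigl(p_{l-1}(0)+(1-p_\beta)p_{l-1}(1)\bigr)^{k-1},\qquad p_l(1) = (k-1)\bigl(p_{l-1}(0)+(1-p_\beta)p_{l-1}(1)\bigr)^{k-2}\, p_\beta\, p_{l-1}(1),
\]
whose ratio $r_l := p_l(1)/p_l(0)$ obeys the M\"obius recursion $r_l = (k-1)p_\beta r_{l-1}/(1+(1-p_\beta)r_{l-1})$ with positive fixed point $r^* = (k-2)\beta - 1$. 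Elementary analysis yields $r_l \to 0$ for $\beta \leq \beta_c$ and $r_l \to r^* > 0$ for $\beta > \beta_c$ --- the algebraic source of the phase transition.

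Conditioning on $\omega$ restricted to $B(o,m)$ and marginalising the conditional $\p_{p_\beta}^T(\cdot\mid \mathrm{valid}_n)$ over deeper edges leaves, up to normalisation,
\[
\arb_\beta^{G_n}(\omega \cap E(B(o,m)) = \xi) \;\propto\; \beta^{|\xi|}\prod_{C}\bigl(1 + r(C)\, r_l\bigr),
\]
the product running over components $C$ of $\xi$ that contain at least one depth-$m$ vertex and $r(C)$ counting those vertices. Passing to $l \to \infty$ proves part (1); the tilt factor collapses to $1$ when $\beta \leq \beta_c$, giving part (2). For part (3) the limit $\propto \beta^{|\xi|}\prod_C(1 + r(C) r^*)$ is identified with the construction by expanding each factor as a sum over an optional ``mark'' placed on one of the $r(C)$ depth-$m$ vertices of $C$ with multiplicity $r^*$. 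Under the construction, the depth-$m$ endpoints of the partial rays $\gamma(v)\cap E(B(o,m))$ for $v\in U$ with $d(o,v)\leq m$ play exactly the role of these marks, and the vertex-inclusion probabilities from the theorem statement are tuned so that each such endpoint receives the correct weight $r^* = (k-2)\beta-1$ (the different probability at $o$ reflects that $o$ has $k$ rather than $k-1$ children). For a general exhaustion, the same analysis applies at each depth-$m$ vertex $v$ to the subtree $T_v\cap V_n$; the M\"obius recursion is contractive towards its attracting fixed point, so once $V_n$ contains a large enough ball around $v$ the relevant ratio is close to $r^*$ (or $0$), and the same limit measure is produced. Exhaustion-independence and, by rerooting, automorphism invariance follow.

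The main obstacle is the matching step in the supercritical case: one must carefully account for how partial rays from different $U$-vertices may coalesce before depth $m$, and check that the induced partition of depth-$m$ endpoints into ``one mark per $\omega$-cluster'' reproduces the abstract weighted sum above. Everything else is elementary tree recursion and a pass to the limit.
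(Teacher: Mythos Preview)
Your plan is sound and, for items 1--2, is essentially a reparametrisation of the paper's argument: your ratio $r_l$ is tied to the paper's limit $\lambda$ by $r^*=(k-1)(1-\lambda)/\lambda$, and your cylinder formula $\propto\beta^{|\xi|}\prod_C(1+r(C)\,r_l)$ is equivalent to Lemma~\ref{lem:big-formula} once one notes that for $B$ a depth-$m$ ball each class has $|A_i|=(k-1)r(C_i)$ and $Q_{(k-1)r}=\lambda^{(k-1)r}(1+r\,r^*)$. The paper handles general exhaustions by squeezing the connection probability between $F^{R_n}(1-p)$ and $\lambda$; your contractivity remark needs the same two-sided bound (monotonicity of the M\"obius map in the inputs) to cover unbalanced subtrees, so do state it. For item 3 the routes genuinely diverge. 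The paper does not match cylinder weights directly; it first extracts structural facts about $\warb^T$ from Lemma~\ref{lem:big-formula} (finite clusters are distributed as under $\p_{p_c}^T$, infinite clusters are one-ended, explicit connection probabilities) and then verifies that the procedure reproduces these by exploring the clusters meeting $B(o,m)$ one at a time. Your route is a single combinatorial identification, and the obstacle you flag does resolve cleanly: since all rays share the map $s$, any two rays that meet coalesce, and any $U$-vertex in an $\omega$-cluster $C\subseteq B(o,m)$ other than the root $r_C$ has its $\omega_0$-closed parent edge opened by some ray passing through it, which inductively forces $r_C\in U$ and all rays in $C$ onto the single path $\gamma(r_C)$. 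Hence each cluster carries at most one ray exit at depth $m$; edges of $C$ off $\gamma(r_C)$ must be $\omega_0$-open while edges on $\gamma(r_C)$ have unconstrained $\omega_0$-status, and the product of the $s$-probabilities along $\gamma(r_C)$ cancels $p_c^{-|P_C|}$ exactly (with a factor $(k-1)/k$ when $r_C=o$), so summing over the $r(C)$ possible exits gives a $\xi$-independent constant times $(1+r(C)\,r^*)$. The paper's route yields one-endedness of infinite clusters as an explicit byproduct and generalises more readily to non-ball cylinder sets; yours is a self-contained computation once the coalescence observation is in hand.
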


The wired uniform spanning forest, i.e.\ the weak limit of the uniform spanning tree with wired boundary conditions, can be sampled by the above procedure if we instead insist that $\eta$ includes every vertex \cite{WUSF}. So as $\beta \to \infty$, the wired arboreal gas $\warb_\beta^T$ converges weakly to the wired uniform spanning forest, which we will denote $\warb_{\infty}^T$. This is an infinite analogue of the fact that on finite graphs, the arboreal gas converges weakly to the uniform spanning tree as $\beta \to \infty$.

The next corollary contains two stochastic domination properties of the wired arboreal gas. We prove these by using the procedure from \cref{thm} to build monotone couplings. For item 1, we use the fact that there is a monotone coupling of the bond percolation measures $(\p_p^T)_{p \in [0,1]}$ \cite[Theorem 2.1]{percolation-book} to build a monotone coupling of the measures $(\warb_{\beta}^T)_{\beta >0}$. For item 2, we modify the procedure in order to sample from the wired arboreal gas conditioned on the state of a given edge. Item 2 is a negative dependence property that is weaker than negative association but stronger than edge-negative correlation. The arboreal gas may satisfy negative association in general, but even edge-negative correlation has not been proved. This is the main obstacle to defining the arboreal gas on arbitrary infinite graphs, including $\mathbb Z^d$ \cite{lattice2}.
 
\begin{cor}[Stochastic Domination] \label{cor:stochastic-domination} 
	\hfill
	\begin{enumerate}[topsep=0pt]
		\item For all $\beta_1,\beta_2 \in (0,\infty]$ with $\beta_1 < \beta_2$, we have $\warb_{\beta_1}^T \lesssim \warb_{\beta_2}^T$.
		\item For each edge $e \in T$, we have $\warb_\beta^T \bra{ \; \cdot \; \mid e \in \omega } \lesssim \warb_\beta^T \bra{ \; \cdot \; \mid e \not\in \omega } $, where these are viewed as measures on the configurations of the edges in $T \backslash \{ e \}$.
	\end{enumerate}
\end{cor}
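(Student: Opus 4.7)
\emph{Proposal.}

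\textbf{Part 1.} I'd split on the positions of $\beta_1, \beta_2$ relative to $\beta_c = 1/(k-2)$ and build monotone couplings case-by-case. When $\beta_2 \leq \beta_c$ both measures equal bond percolation by \cref{thm}(2), and the standard monotone coupling of $(\p_p^T)_{p \in [0,1]}$ applies since $p_{\beta_1} < p_{\beta_2}$. When $\beta_1 \leq \beta_c < \beta_2$, chain that case with the observation that the procedure of \cref{thm}(3) writes $\omega = \omega_0 \cup \bigcup_{v \in U} \gamma(v) \supseteq \omega_0 \sim \p_{p_c}^T$, which gives $\p_{p_c}^T \lesssim \warb_{\beta_2}^T$. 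When $\beta_c < \beta_1 < \beta_2 \leq \infty$, run the procedure for both samples using the same $\omega_0$ and the same $s$, and couple $\eta_1 \subseteq \eta_2$ coordinate-wise by a uniform-quantile trick. This is possible because the two inclusion probabilities $\frac{\beta(k-2)k - k}{\beta(k-2)k - 1}$ and $\frac{\beta(k-2) - 1}{\beta(k-2)}$ are both strictly increasing in $\beta$ and tend to $1$ as $\beta \to \infty$, recovering the WUSF procedure in which every vertex is included in $\eta$. Since the rule defining $U_i$ from $\eta_i$ uses only the shared $\omega_0$, the inclusion $\eta_1 \subseteq \eta_2$ gives $U_1 \subseteq U_2$, whence $\omega_1 \subseteq \omega_2$.

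\textbf{Part 2.} When $\beta \leq \beta_c$ the measure $\warb_\beta^T = \p_{p_\beta}^T$ is a product on edges, so the two conditional measures on $T \backslash \{e\}$ coincide and the claim is trivial. When $\beta > \beta_c$, I'd construct the coupling by modifying the procedure so it samples directly from each conditional measure. Let $\pi$ denote the path from $o$ to $e^-$ in $T$, with vertices $o = v_0, v_1, \ldots, v_m = e^-$. The event $\{e \in \omega\}$ is a function only of the \emph{local} variables: $\omega_0$ on $\{e\}$ and on the edges of $\pi$, $\eta$ on the vertices of $\pi$, and $s$ on the vertices of $\pi$. All other randomness is independent of $\{e \in \omega\}$ under $\warb_\beta^T$, so it can be shared between the two coupled samples, including $\omega_0$ off $\pi$, $\eta$ off $\pi$ (in particular at $e^+$), $s$ off $\pi$, and the rays $\gamma(v)$ launched from sources $v \notin \pi$.

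The heart of the argument is to couple the local variables so that $\omega \cap (T \backslash \{e\})$ is set-theoretically smaller under the $\{e \in \omega\}$ conditioning than under $\{e \notin \omega\}$. The guiding picture is that each way of forcing $e \in \omega$ depletes the ray structure near $e$: if $e \in \omega_0$, then $e^+ \notin U$ by definition, which kills the would-be ray from $e^+$ down into the subtree below $e$; whereas if $e \notin \omega_0$ but $e$ lies on an ancestor's ray, then $s(e^-) = e^+$ is forced, steering the ray from $e^-$ into the subtree below $e$ rather than into any of its $k-2$ sibling subtrees. Neither depletion occurs under $\{e \notin \omega\}$, so in a well-chosen coupling the extra rays present in that sample cover everything in the $\{e \in \omega\}$ sample. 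I expect the main technical obstacle to be cleanly handling the superposition between the $\omega_0$ source and the $m+1$ possible ancestral sources $v_0, v_1, \ldots, v_m$ of $e$'s inclusion without double-counting; this should be tractable by an inductive coupling running along $\pi$ from $o$ outward, deciding at each step whether the ancestor chain still carries a ray toward $e$.
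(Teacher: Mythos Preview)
Your Part 1 is correct and matches the paper's approach: both use the monotone coupling of $(\p_p^T)_p$ together with the procedure of \Cref{thm}(3), and you have simply spelled out the case split that the paper leaves implicit.

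For Part 2, the paper takes a cleaner and genuinely different route, and your sketch has a gap. The paper's first move is to use the automorphism-invariance of $\warb_\beta^T$ (from \Cref{thm}(1)) to assume without loss of generality that $e$ is adjacent to $o$, say $e=\{o,u\}$. Its second move is to \emph{re-root} the sampling procedure at the edge $e$: one samples $\omega_0\sim\p_{p_c}^{T\setminus\{e\}}$, defines $s(v)$ to step \emph{away from $e$} rather than away from $o$, and handles $\eta$ on all vertices except $\{o,u\}$ as before. After this re-rooting the two halves of $T\setminus\{e\}$ are symmetric, and the entire difference between the two conditional laws is absorbed into a random subset $A\subseteq\{o,u\}$ recording which of the two endpoints launches a ray into its half. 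Under $\{e\in\omega\}$ the vertices $o$ and $u$ lie in the same cluster, so by the no-two-disjoint-rays property (\Cref{cor:finite-infinite}(3)) one has $|A_{\text{open}}|\le 1$; under $\{e\notin\omega\}$ the two endpoints decide independently. A short direct calculation then shows $A_{\text{open}}$ can be coupled inside $A_{\text{closed}}$, which finishes the proof.

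Your approach keeps the original root $o$ and proposes to couple inductively along the path $\pi$ from $o$ to $e^-$. You correctly identify the local variables on which $\{e\in\omega\}$ depends, but you have not actually built the coupling, and your heuristic misses the structural reason the domination holds: it is not merely that forcing $e\in\omega$ ``depletes'' or ``steers'' individual rays, but that it merges $o$ and $u$ into a single cluster which can carry at most one infinite end. In your rooted-at-$o$ picture this constraint is entangled with the chain of decisions along $\pi$ (each $v_j$ may or may not be a ray source, and each $s(v_j)$ may or may not point toward $e$), and turning your inductive idea into an honest monotone coupling would require substantially more than you have sketched. The paper's re-rooting collapses all of that into a two-point comparison.
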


\begin{rk}
	While writing this paper, we learned that G. Ray and B. Xiao have also been independently studying the wired arboreal gas on regular trees \cite{forests-on-wired-trees}. Their main result is similar to ours. However, their arguments have a slightly different flavour, being more similar to \cite{random-cluster-on-tree}.
\end{rk}

\section{Existence via cylinder events}
In this section, we prove that $\warb_\beta^T$ is well-defined and compute the probability it assigns to certain cylinder events, i.e.\  events that depend on finitely many edges. We naturally identify the edges in each $G_n$ with edges in $T$, so that for each edge $e \in G_n$, the endpoints $e^-$ and $e^+$ are defined as vertices in $G_n$. Given vertices $u,v \in T$, we say $u$ is a descendant of $v$ if the geodesic in $T$ from $o$ to $u$ crosses $v$. For each $G_n$ and edge $e \in G_n$, we define $G_n(e)$ to be the subgraph of $G_n$ induced by $\{e^-,\partial_n\}$ and the descendants of $e^+$. Finally, we drop the subscripts in $\arb_\beta^G, \warb_\beta^G, \p_{p_\beta}^G, p_\beta$ whenever this does not cause confusion.

\begin{lem}
	Let $e_1,\dots ,e_r$ be edges in $T$ such that $e_i^-$ is not a descendent of $e_j^-$ for any $i \not=j$. Pick $n$ sufficiently large that $e_1^{-},\dots ,e_r^- \in V_n$, and let $H$ be the graph formed from $G_n(e_1),\dots, G_n(e_r)$ by identifying $e_1^-, \dots ,e_r^-$ to a single vertex $v$. For each $i$, define $q_i := \arb^{G_n(e_i)} \bra{ e_i^- \not \leftrightarrow \partial_n }$. Then
	\begin{equation} \label{eq:discon-formula}
			\arb^H \bra{ v \not \leftrightarrow \partial_n } = \frac{1}{1-r + \sum_i \frac{1}{q_i}},
	\end{equation}
	and abbreviating $\{ \omega \text{ is a forest} \}$ to $\{ \text{forest} \}$,
	\begin{equation} \label{eq:forest-formula}
			\p^H   \bra{ \text{forest}} = \bra{ \prod_i q_i + \sum_i (1-q_i) \prod_{j \not= i} q_j } \cdot \prod_i \p^{G_n(e_i)} \bra{ \text{forest}}.
	\end{equation}
\end{lem}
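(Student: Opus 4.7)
My plan is to decompose any configuration on $H$ into the configurations it induces on each piece $G_n(e_i)$ and then characterise when such a glued configuration is a forest. Writing $\omega_i$ for the restriction of $\omega^H \in \{0,1\}^{E(H)}$ to the edges of $G_n(e_i)$, the key observation I will use is that because $G_n(e_1),\dots,G_n(e_r)$ pairwise share exactly the two vertices $v$ and $\partial_n$, the configuration $\omega^H$ is a forest if and only if every $\omega_i$ is a forest and at most one of them contains a path from $e_i^-$ to $\partial_n$. The ``if'' direction will be immediate; the ``only if'' direction uses that any cycle in $\omega^H$ mixing edges from two different pieces would have to traverse two internally disjoint routes from $v$ to $\partial_n$, one in each piece. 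Under $\p^H$, the restrictions $\omega_1,\dots,\omega_r$ are independent with marginals $\p^{G_n(e_i)}$.

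Next I will introduce the events $A_i := \{ \omega_i \text{ is a forest and } e_i^- \not\leftrightarrow \partial_n \text{ in } \omega_i \}$ and $B_i := \{ \omega_i \text{ is a forest and } e_i^- \leftrightarrow \partial_n \text{ in } \omega_i \}$. By \eqref{eq:arb-perc}, these have probabilities $\p^{G_n(e_i)}(A_i) = q_i \cdot \p^{G_n(e_i)} \bra{ \text{forest} }$ and $\p^{G_n(e_i)}(B_i) = (1-q_i) \cdot \p^{G_n(e_i)} \bra{ \text{forest} }$. The forest event for $\omega^H$ then decomposes as the disjoint union of $\bigcap_i A_i$ together with the $r$ events $B_i \cap \bigcap_{j \neq i} A_j$, and summing over this decomposition using independence, then factoring out $\prod_i \p^{G_n(e_i)} \bra{ \text{forest} }$, immediately gives \eqref{eq:forest-formula}.

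For \eqref{eq:discon-formula}, I will note that disconnection of $v$ from $\partial_n$ in $\omega^H$ forces disconnection in every piece, so $\{ v \not\leftrightarrow \partial_n \text{ in } \omega^H \} \cap \{\omega^H \text{ is a forest}\}$ is exactly $\bigcap_i A_i$. Dividing $\p^H$ of this intersection by $\p^H \bra{ \text{forest} }$ and cancelling the common factor $\prod_i \p^{G_n(e_i)} \bra{\text{forest}}$ yields
\[
	\arb^H \bra{ v \not\leftrightarrow \partial_n } = \frac{\prod_i q_i}{\prod_i q_i + \sum_i (1-q_i) \prod_{j \neq i} q_j},
\]
and dividing numerator and denominator by the (positive) quantity $\prod_i q_i$ produces the stated expression. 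The only subtle step is the forest characterisation across the two-point gluing; once that is in hand, everything else reduces to bookkeeping via \eqref{eq:arb-perc} and the independence of the pieces under $\p^H$.
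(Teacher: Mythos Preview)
Your proposal is correct and matches the paper's proof essentially line for line: the paper also isolates the key structural fact that a cycle in $H$ either lies in a single $G_n(e_i)$ or consists of two $v$--$\partial_n$ paths in distinct pieces, then uses independence under $\p^H$ to compute $\p^H(\{\text{forest}\}\cap\{v\not\leftrightarrow\partial_n\})$ and $\p^H(\{\text{forest}\}\cap\{v\leftrightarrow\partial_n\})$ exactly as your events $\bigcap_i A_i$ and $B_i\cap\bigcap_{j\neq i}A_j$, sums for \eqref{eq:forest-formula}, and divides for \eqref{eq:discon-formula}. The only difference is notational: you name the events $A_i,B_i$ whereas the paper writes the corresponding probabilities directly.
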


\begin{proof}
	Every cycle in $H$ is either entirely contained in some $G_n(e_i)$ or consists of two edge-disjoint paths from $v$ to $\partial_n$ that are each entirely contained in some distinct $G_n(e_i)$ and $G_n(e_j)$, respectively. So by independence and \eqref{eq:arb-perc},
	\begin{equation} \label{eq:cases-discon} \begin{split}
		\p^H \bra{ \{ \text{forest} \} \cap \{ v \not \leftrightarrow \partial_n \} } &= \prod_i \p^{G_n(e_i)} \bra{ \{ \text{forest} \} \cap \{ e_i^- \not\leftrightarrow \partial_n \} } \\ &= \bra{ \prod_i q_i } \cdot \prod_i \p^{G_n(e_i)} \bra{\text{forest}},
	\end{split} \end{equation}
	\begin{equation} \label{eq:cases-con} \begin{split}
	\p^H \bra{ \{ \text{forest} \} \cap \{v \leftrightarrow \partial_n \} } &= \sum_i \p^{G_n(e_i)}	\bra{ \{ \text{forest} \} \cap \{e_i^- \leftrightarrow \partial_n \} } \prod_{j \not= i} \p^{G_n(e_j)} \bra{ \{ \text{forest} \} \cap \{ e_j^- \not\leftrightarrow  \partial_n\} }\\
	&= \bra{\sum_i (1-q_i) \prod_{j \not=i} q_j } \cdot \prod_i \p^{G_n(e_i)} \bra{\text{forest}}.
	\end{split} \end{equation}
Adding \eqref{eq:cases-discon} to \eqref{eq:cases-con} gives \eqref{eq:forest-formula}. Dividing \eqref{eq:cases-discon} by \eqref{eq:forest-formula} gives an expression for $\arb^H\bra{v \not\leftrightarrow \partial_n}$ that rearranges to the claimed formula \eqref{eq:discon-formula}.
\end{proof}

\begin{lem} \label{lem:limit}
For each edge $e \in T$,
\[
	\lim_{n \to \infty} \arb^{G_n(e)} \bra{ e^- \not\leftrightarrow \partial_n } = \lambda :=
	\begin{cases}
		1 \quad & \text{if } \beta \leq \frac{1}{k-2},\\
		\frac{k-1}{(k-2)(1+\beta)} \quad & \text{if } \beta \geq \frac{1}{k-2}.
	\end{cases}
\]
\end{lem}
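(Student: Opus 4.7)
The plan is to derive a recursion for $q_n(e) := \arb^{G_n(e)}(e^- \not\leftrightarrow \partial_n)$ via the previous lemma and then identify the limit by a monotone iteration argument that works for any exhaustion.

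First I would apply the previous lemma with $r = k-1$ to the $k-1$ child edges $f_1, \dots, f_{k-1}$ of $e^+$, all of which satisfy $f_i^- = e^+$. Since $e^-$ is a pendant vertex of $G_n(e)$ (degree one), attaching it contributes no cycles, and an elementary computation shows that under $\arb^{G_n(e)}$ the restriction of $\omega$ to $G_n(e) \setminus \{e^-, e\}$ is distributed as $\arb^{G_n(e) \setminus \{e^-, e\}}$ with $e$ included independently with probability $p := p_\beta$. Combined with \eqref{eq:discon-formula} this yields
\[ q_n(e) = (1-p) + \frac{p}{(2-k) + \sum_{i=1}^{k-1} 1/q_n(f_i)} =: F\bigl(q_n(f_1), \dots, q_n(f_{k-1})\bigr) \]
when $e^+ \in V_n$, with the base case $q_n(e) = 1-p$ when $e^+ \notin V_n$ (since then $G_n(e)$ is a single edge from $e^-$ to $\partial_n$).

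Second, I would analyse the scalar reduction $f(x) := F(x, \dots, x)$. This is a M\"obius transformation whose fixed-point equation factors to a quadratic with roots $1$ and $\lambda := (k-1)/[(k-2)(1+\beta)]$; note $\lambda < 1$ iff $\beta > \beta_c$. Direct differentiation shows $f'(x) > 0$ on $(0,1]$, while $\lambda/(1-p) = (k-1)/(k-2) > 1$ gives $1 - p < \lambda$, and $f(1-p) > 1 - p$ follows from a one-line calculation. Hence the iterates $a_m := f^m(1-p)$ form a monotone increasing sequence converging to the smallest fixed point of $f$ in $[1-p, \infty)$, namely $\min(1, \lambda)$; this matches the $\lambda$ claimed in both regimes.

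Finally, I would lift the scalar convergence to general exhaustions using the coordinatewise monotonicity of $F$ (all partial derivatives are positive on $(0,1]^{k-1}$). Let $R_n$ be the largest $r \ge 0$ such that $V_n$ contains every descendant of $e^+$ at distance strictly less than $r$ in $T$. Structural induction using $F(a_{R-1},\dots,a_{R-1}) = a_R$ gives the lower bound $q_n(e) \ge a_{R_n}$. For the upper bound, when $\lambda \le 1$, structural induction using $F(\lambda,\dots,\lambda) = \lambda$ and $1-p \le \lambda$ at the leaves of the subtree of $V_n$ below $e^+$ gives $q_n(e) \le \lambda$; when $\lambda \ge 1$, the bound $q_n(e) \le 1$ is trivial. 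Since $(V_n)$ exhausts $T$, $R_n \to \infty$ and hence $a_{R_n} \to \min(1, \lambda)$, which matches the upper bound. The main subtlety, since an arbitrary exhaustion need not be spherical, is that the values $q_n(f_i)$ at sibling edges may differ; the coordinatewise monotonicity of $F$ is exactly what bridges this gap, reducing everything to the scalar iteration.
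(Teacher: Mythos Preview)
Your proposal is correct and follows essentially the same approach as the paper: derive the recursion from \eqref{eq:discon-formula} by conditioning on the state of $e$, identify the relevant fixed point of the scalar iteration $f(x)=F(x,\dots,x)$, and use coordinatewise monotonicity of the multivariate map $F$ to sandwich $q_n(e)$ between $f^{R_n}(1-p)$ and $\min(1,\lambda)$ for an arbitrary exhaustion. The paper phrases the monotonicity step via $F(\min_i q_i)$ and $F(\max_i q_i)$ and handles the two regimes by defining $\lambda$ piecewise from the outset, but these are purely cosmetic differences.
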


\begin{proof}
This formula will follow from the fact that $\lambda$ is the smallest fixed point of a suitable recursion function $F$.
Pick $n$ sufficiently large that $e^+ \in V_n$. Consider an edge $f \in G_n(e)$ with $f^+ \not= \partial_n$. There are $k-1$ edges $f_1,\dots ,f_{k-1} \in G_n(e)$ with $f_i^- = f^+$. For each $i$, define $q_i := \arb^{G_n(f_i)} \bra{f_i^- \not \leftrightarrow \partial_n }$. By \eqref{eq:discon-formula} and conditioning on the state of $f$,
\begin{equation} \label{eq:discon-with-extend}
	\arb^{G_n(f)} \bra{ f^- \not \leftrightarrow \partial_n } = (1-p) + p \cdot \frac{1}{1-(k-1)+\sum_i \frac{1}{q_i}}. 
\end{equation}
Define a function $F:(0,1] \to (0,1]$ by $F(q) := (1-p) + p \cdot  \frac{1}{1-(k-1)+(k-1)/q}$, which comes from setting $q_1=\dots = q_{k-1}=q$ in \eqref{eq:discon-with-extend}. Notice that $\lambda$ is a fixed point of $F$, and for every $q \in (0,\lambda)$, we have $F(q) \in(q,\lambda)$. In particular, if $q_1,\dots, q_{k-1} \in (0,\lambda)$, then
\begin{equation} \label{eq:f-inequality}
	\min_i q_i < F\bra{\min_i q_i} \leq \arb^{G_n(f)} \bra{ f^- \not \leftrightarrow \partial_n } \leq F\bra{\max_i q_i} < \lambda.
\end{equation}
For every edge $f \in G_n(e)$ with $f^+ = \partial_n$, we trivially have $\arb^{G_n(f)} \bra{ f^- \not\leftrightarrow \partial_n} = 1-p \in (0,\lambda)$. So by \cref{eq:f-inequality} and induction on the graph distance from $\partial_n$, we have $\arb^{G_n(f)} \bra{ f^-  \not \leftrightarrow \partial_n} \in (1-p,\lambda)$ for every edge $f \in G_n(e)$.

Let $R_n$ be the maximum integer such that the ball of vertices $\{ u \in T : d(e^-,u) \leq R_n \}$ is contained in $V_n$. We have shown that for every edge $f \in G_n(e)$, in particular every edge $f \in G_n(e)$ with $d(e^-,f^-) = R_n$, we have $\arb^{G_n(f)} \bra{ f^-  \not \leftrightarrow \partial_n} \in (1-p,\lambda)$. So by \eqref{eq:f-inequality} and induction on the value of $R_n$,
\begin{equation} \label{eq:squeeze}
	\underbrace{F \circ F \circ \dots \circ F}_{R_n \text{ copies}}(1-p) \leq \arb^{G_n(e)} \bra{ e^- \not \leftrightarrow \partial_n } < \lambda.
\end{equation}
By \eqref{eq:f-inequality}, the sequence $(1-p)$, $F(1-p)$, $F \circ F (1-p)$, $F \circ F \circ F (1-p)$, $\dots$ is an increasing sequence of real numbers in $(0,\lambda)$. So this sequence converges to some limit $l \in (0,\lambda]$. Since $F$ is continuous, $l$ must be a fixed point of $F$. Since $F(q) > q$ whenever $q \in (0,\lambda)$, we know $l=\lambda$. In particular, since $R_n \to \infty$ as $n \to \infty$,
\[
	\lim_{n \to \infty} \underbrace{F \circ F \circ \dots \circ F}_{R_n \text{ copies}}(1-p) = \lambda.
\]
The result now follows from \eqref{eq:squeeze}.
\end{proof}

We use these lemmas to relate the limiting probabilities of certain cylinder events under $\arb^{G_n}$ to their probabilities under $\p^T$. Let $B$ be a finite connected set of edges in $T$ that contains an edge adjacent to $o$, and let $\eta \in \{ 0,1 \}^B$ be a configuration on $B$. For every set of edges $E \subseteq T$, we write $\partial E$ for the set of edges in $T \backslash E$ that are adjacent to $E$. $\eta$ induces the equivalence relation on $\partial B$ in which edges $e$ and $f$ are related if and only if $e^-$ and $f^-$ are connected by a path in $\eta$. The next lemma explains how to use the sizes of the equivalence classes $A_1,\dots,A_t$ to compute the limiting probability of the cylinder event $\{ \omega \cap B = \eta \}$ under $\arb^{G_n}$.

\begin{lem} \label{lem:big-formula} For each non-negative integer $m$, define $Q_m := \lambda^m + m(1-\lambda) \lambda^{m-1}$, where $\lambda$ is the limit from \cref{lem:limit}. Then
\[
	\lim_{n \to \infty} \arb^{G_n} \bra{ \omega \cap {B} = \eta } = \frac{\prod_{i=1}^t Q_{\abs{A_i}}}{Q_k \cdot Q_{k-1}^{\abs{ B}}} \cdot \p^{T} \bra{ \omega \cap {B} = \eta}.
\]
\end{lem}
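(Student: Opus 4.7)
My plan is to write $\arb^{G_n}(\omega \cap B = \eta) = \p^{G_n}(\omega \cap B = \eta, \text{forest}) / \p^{G_n}(\text{forest})$, express both numerator and denominator as products over subgraphs of the form $G_n(e)$ via repeated use of \eqref{eq:forest-formula}, cancel common factors, and take $n \to \infty$ using \cref{lem:limit}. For the numerator, $\p^{G_n}(\omega \cap B = \eta) = p^{\abs{\eta}}(1-p)^{\abs{B}-\abs{\eta}}$ since $\p^{G_n}$ is a product measure, and to analyse the conditional forest event I would form the quotient graph $H_n$ obtained from $G_n$ by deleting the edges of $B$ and then identifying each connected component of $\eta$ to a single vertex. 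Conditional on $\omega \cap B = \eta$, $\omega$ is a forest in $G_n$ iff $\omega \cap (E(G_n) \setminus B)$ is a forest in $H_n$. For each class $A_i$, let $v_i$ be the vertex of $H_n$ corresponding to the $\eta$-component hit by $A_i$, and let $H_n^{(i)}$ be the subgraph of $H_n$ formed from $\{G_n(f): f \in A_i\}$ by identifying $\{f^-: f \in A_i\}$ to $v_i$. Since $V(B)$ is a convex subtree containing $o$ and every $f \in \partial B$ satisfies $f^+ \notin V(B)$, the descendant subtrees of the $f^+$'s are pairwise disjoint and disjoint from $V(B)$, so distinct $H_n^{(i)}$'s share only the vertex $\partial_n$; as a simple cycle visits $\partial_n$ at most once, every simple cycle in $H_n$ lies in a single $H_n^{(i)}$, and by independence
\[
	\p^{G_n}(\text{forest} \mid \omega \cap B = \eta) = \prod_{i=1}^t \p^{H_n^{(i)}}(\text{forest}).
\]
Applying \eqref{eq:forest-formula} to each $H_n^{(i)}$ yields $\p^{H_n^{(i)}}(\text{forest}) = C_i^{(n)} \prod_{f \in A_i} \p^{G_n(f)}(\text{forest})$ with $C_i^{(n)} \to Q_{\abs{A_i}}$ by \cref{lem:limit}.

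For the denominator, $G_n$ itself has the structure required by \eqref{eq:forest-formula} with $v = o$ and the $k$ edges $e_1, \dots, e_k$ emanating from $o$, giving $\p^{G_n}(\text{forest}) = C_o^{(n)} \prod_j \p^{G_n(e_j)}(\text{forest})$ with $C_o^{(n)} \to Q_k$. I would then peel off the $G_n(e_j)$'s for $e_j \in B$ recursively: since $o$ is isolated in $G_n(e_j) \setminus \{e_j\}$, the edge $e_j$ never closes a cycle and $\p^{G_n(e_j)}(\text{forest}) = \p^{G_n(e_j) \setminus \{e_j\}}(\text{forest})$, and applying \eqref{eq:forest-formula} at $e_j^+$ to the $k-1$ edges out of $e_j^+$ contributes a factor $C_{e_j}^{(n)} \to Q_{k-1}$ while replacing $G_n(e_j)$ by its $k-1$ child subgraphs. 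Iterating this peeling once per edge of $B$ terminates at edges of $\partial B$, yielding
\[
	\prod_{j=1}^k \p^{G_n(e_j)}(\text{forest}) = \Bigl( \prod_{e \in B} C_e^{(n)} \Bigr) \prod_{f \in \partial B} \p^{G_n(f)}(\text{forest}).
\]

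Combining, the $\prod_{f \in \partial B} \p^{G_n(f)}(\text{forest})$ factors cancel between numerator and denominator, leaving
\[
	\arb^{G_n}(\omega \cap B = \eta) = \frac{p^{\abs{\eta}}(1-p)^{\abs{B}-\abs{\eta}} \prod_i C_i^{(n)}}{C_o^{(n)} \prod_{e \in B} C_e^{(n)}} \longrightarrow \frac{\prod_i Q_{\abs{A_i}}}{Q_k \, Q_{k-1}^{\abs{B}}} \cdot \p^T(\omega \cap B = \eta).
\]
The main obstacle is the factorization in the first paragraph: identifying the right quotient $H_n$ and verifying that simple cycles cannot sneak between different $H_n^{(i)}$'s via $\partial_n$, which is precisely what makes the forest probability split into a product. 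Once this factorization is in hand, the rest is careful bookkeeping powered by \eqref{eq:forest-formula} and \cref{lem:limit}.
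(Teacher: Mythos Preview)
Your proposal is correct and follows essentially the same route as the paper: Bayes' rule to write $\arb^{G_n}(\omega\cap B=\eta)$ as $\p^T(\omega\cap B=\eta)$ times a ratio of forest probabilities, the contract/delete quotient of $G_n$ to factor $\p^{G_n}(\text{forest}\mid \omega\cap B=\eta)$ over the classes $A_i$ via \eqref{eq:forest-formula}, and the same recursive peeling of $\p^{G_n}(\text{forest})$ through the edges of $B$ to produce $Q_k Q_{k-1}^{|B|}$; the common factor $\prod_{f\in\partial B}\p^{G_n(f)}(\text{forest})$ cancels and \cref{lem:limit} finishes. The cycle-separation obstacle you flag is exactly the point the paper handles (``copies of $H_1,\dots,H_t$ that meet only at $\partial_n$''), and your justification via simplicity at $\partial_n$ is the right one.
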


\begin{proof}
	Pick $n$ sufficiently large that $G_n$ contains $B \cup \partial B$. By \eqref{eq:arb-perc},
	\begin{equation} \label{eq:rearrange-conditional}
	\arb^{G_n} \bra{ \omega \cap B = \eta } = \p^{G_n} \bra{ \omega \cap B = \eta  \mid \text{forest}} = \frac{\p^{G_n} \bra{ \text{forest} \mid \omega \cap B = \eta} }{\p^{G_n} \bra{ \text{forest} } } \cdot \p^T \bra{ \omega \cap B = \eta }.
	\end{equation}

	For each $i \in \{1 ,\dots, t\}$, let $H_i$ be the graph formed from $\{ G_n(e) : e \in A_i \}$ by identifying the vertices in $\{e^- : e \in A_i\}$ to a single vertex $v_i$. Starting from $G_n$, when we contract every edge in $\eta$ and delete every edge in $B \backslash \eta$, we are left with copies of $H_1,\dots, H_t$ that meet only at $\partial_n$. Each cycle in this graph is entirely contained in a copy of some $H_i$. So by independence, $
		\p^{G_n} \bra{ \text{forest} \mid \omega \cap B = \eta } = \prod_{i=1}^t \p^{H_i} \bra{ \text{forest} }$. For each edge $e \in G_n$, define $q_e:= \arb^{G_n(e)} \bra{ e^- \not \leftrightarrow \partial_n }$. By \eqref{eq:forest-formula} and \Cref{lem:limit}, it follows that
		\begin{equation} \label{eq:after-contraction} \begin{split}
			\p^{G_n} \bra{ \text{forest} \mid \omega \cap B = \eta } &= \prod_{i=1}^t \bra{ \prod_{e \in A_i} q_e + \sum_{e \in A_i} (1-q_e) \prod_{f \in A_i \backslash \{ e \}} q_f } \bra{ \prod_{e \in A_i} \p^{G_n(e)} \bra{\text{forest}}} \\
			&\sim \bra{\prod_{i=1}^t Q_{\abs{A_i}}} \cdot \prod_{e \in \partial B} \p^{G_n(e)} \bra{\text{forest}}.
		\end{split} \end{equation}
		Here we use the notation $y(n) \sim z(n)$ to mean $\lim_{n \to \infty} y(n) / z(n) = 1$. Similarly, by \eqref{eq:forest-formula} and \Cref{lem:limit},
		\[ \begin{split}
			\p^{G_n} \bra{ \text{forest} } &= \bra{ \prod_{e : \; e^- = o} q_e + \sum_{e: \; e^- =o} (1-q_e) \prod_{\substack{ f : \; f^- =o \\ f \not= e}} q_f } \cdot \prod_{e :\; e^- = o} \p^{G_n(e)} \bra{ \text{forest} } \\
			&\sim Q_k \cdot \prod_{e :\; e^- = o} \p^{G_n(e)} \bra{ \text{forest} }.
		\end{split} \]
For every edge $e \in G_n$, we have $\p^{G_n(e)} \bra{\text{forest}} = \p^{G_n(e) \backslash \{e\}} \bra{\text{forest}}$, since no cycle in $G_n(e)$ crosses $e^-$. So by repeatedly applying \eqref{eq:forest-formula} and \Cref{lem:limit},
\begin{equation} \label{eq:before-contraction} \begin{split}
	\p^{G_n} \bra{ \text{forest} } &\sim Q_k \cdot \bra{ \prod_{e_1 \in \partial B: \; e_1^- = o} \p^{G_n(e_1)} \bra{ \text{forest}}} \cdot \bra{ \prod_{e_1 \in B:\; e_1^- = o} Q_{k-1} \cdot  \bra{ \prod_{e_2 : \; e_2 ^- = e_1^+} \p^{G_n (e_2)} \bra{ \text{forest} }}  } \\
	&\dots \\
	&\sim Q_k Q_{k-1}^{\abs{B}} \cdot \prod_{e \in \partial B} \p^{G_n(e)} \bra{ \text{forest} }.
\end{split} \end{equation}
The result now follows by plugging \eqref{eq:after-contraction} and \eqref{eq:before-contraction} into \eqref{eq:rearrange-conditional}.
\end{proof}

From \Cref{lem:big-formula}, whose formula does not depend on $(V_n)$, we deduce that $\arb^{G_n}$ converges weakly as $n \to \infty$ to a limit $\warb^T$ that is independent of the choice of exhaustion. This verifies item 1 of \cref{thm}. To prove item 2, take $\beta \leq \frac{1}{k-2}$. By \Cref{lem:limit}, $\lambda = 1$, so $Q_m = 1$ for every non-negative integer $m$. So $\warb^T$ and $\p^T$ assign the same probability to every cylinder event covered by \Cref{lem:big-formula} and hence to every event.

\section{Supercritical phase}
In this section, we prove item 3 from \Cref{thm}, which characterises the supercritical phase of $\warb^{T}$. Let $E_1 \sqcup E_2$ be a partition of the edges adjacent to $o$ in $T$. Consider the corresponding subgraphs $T_1$ and $T_2$ induced by the union of $E_1$ with the descendants of $\{e^+ : e \in E_1 \}$ and the union of $E_2$ with the descendants of $\{e^+ : e \in E_2 \}$, respectively. Our first step is to relate the restricted configurations $\omega \cap T_1$ and $\omega \cap T_2$ to each other. In particular, writing $K_u$ for the cluster containing $u$, we relate the restricted configuration $\omega \cap T_1$ to the restricted cluster $K_o \cap T_2$ on the event that $K_o \cap T_2$ is finite. Interestingly, we find that when conditioned to be finite, $K_o \cap T_2$ is distributed as it is under critical bond percolation $\p_{p_c}^T$.

\begin{lem} \label{lem:technical}
When $\beta \geq \frac{1}{k-2}$,
\begin{enumerate}[topsep=0pt]
	\item Under $\warb^T \bra{ \; \cdot \; \mid \abs{K_o \cap T_2} < \infty }$, the random variables $\omega \cap T_1$ and $K_o \cap T_2$ are independently distributed, and $K_o \cap T_2$ has the same distribution as it does under $\p_{p_c}^T$;
	\item $\warb^T \bra{ \abs{K_o \cap T_2}  < \infty } = \frac{\beta(k-2)\abs{E_1} + \abs{E_2} -1 }{\beta(k-2)k - 1}$.
\end{enumerate}

\end{lem}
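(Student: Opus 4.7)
The event $\{K_o \cap T_2 = K\}$, for any finite connected subtree $K \subseteq T_2$ with $K \ni o$, is the cylinder event on the finite connected set $B_K := E(K) \cup \partial K^{T_2}$ demanding that every edge of $E(K)$ is open and every edge of $\partial K^{T_2}$ (the edge boundary of $K$ in $T_2$) is closed. I would compute its joint distribution with an arbitrary $T_1$-cylinder using \Cref{lem:big-formula}, then use the supercritical value $\lambda = (k-1)/[(k-2)(1+\beta)]$ from \Cref{lem:limit} to recognise the answer as critical bond percolation on $T_2$. Concretely, fix a finite connected $B_1 \subseteq T_1$ with $E_1 \subseteq B_1$, a configuration $\eta_1 \in \{0,1\}^{B_1}$, and apply \Cref{lem:big-formula} to $B := B_1 \cup B_K$ (connected at $o$) with configuration $\eta := \eta_1 \cup \eta_K$ realising $K_o \cap T_2 = K$.

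The key structural step is analysing the equivalence classes on $\partial B$. A path in $\eta_K$ is contained in $K$, so no vertex of the outer boundary $\tilde{K}$ is $\eta$-connected to $o$; and the only $\partial B$-edges incident to $o$ lie in $T_1$, because $E_2 \subseteq B_K$. Writing $n_1$ for the size of the $o$-class of $\partial B_1 \cap T_1$ under $\eta_1$ and $m_1^{(1)},\dots, m_1^{(r_1)}$ for the sizes of the other $T_1$-side classes, the classes on $\partial B$ are exactly one of size $n_1$, the classes of sizes $m_1^{(j)}$, and one class of size $k-1$ for each $v \in \tilde{K}$ (its $k-1$ child edges). Using $|B| = |B_1| + |E(K)| + |\tilde{K}|$, \Cref{lem:big-formula} yields
\[
\warb^T\bra{\omega \cap B_1 = \eta_1,\, K_o \cap T_2 = K} = \frac{Q_{n_1}\prod_j Q_{m_1^{(j)}}}{Q_k\, Q_{k-1}^{|B_1|+|E(K)|}}\,\p^T\bra{\omega \cap B_1 = \eta_1}\, p^{|E(K)|}(1-p)^{|\tilde{K}|}.
\]
Repeating the analysis with $B = B_K$ alone, the $o$-class now consists of $E_1 \subseteq \partial B_K$, of size $|E_1|$, and one obtains $\warb^T(K_o \cap T_2 = K) = \frac{Q_{|E_1|}}{Q_k\, Q_{k-1}^{|E(K)|}}\, p^{|E(K)|}(1-p)^{|\tilde{K}|}$.

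The remaining work is algebraic. From the explicit $\lambda$ I would verify the identities $Q_{k-1} = (k-1)p\lambda^{k-2}$ (equivalently $p/Q_{k-1} = p_c/\lambda^{k-2}$) and $1-p = (1-p_c)\lambda$, together with the degree-counting identity $|\tilde{K}| = |E_2| + (k-2)|E(K)|$ valid for any finite subtree $K \subseteq T_2$ rooted at $o$. These convert $(p/Q_{k-1})^{|E(K)|}(1-p)^{|\tilde{K}|}$ into $\lambda^{|E_2|}\, p_c^{|E(K)|}(1-p_c)^{|\tilde{K}|}$, and so
\[
\warb^T\bra{K_o \cap T_2 = K} = \frac{Q_{|E_1|}}{Q_k}\lambda^{|E_2|}\cdot p_c^{|E(K)|}(1-p_c)^{|\tilde{K}|}.
\]
Under $\p_{p_c}^T$ the cluster $K_o \cap T_2$ is a critical Galton--Watson tree with $\operatorname{Bin}(k-1,p_c)$ offspring at non-root vertices and is therefore a.s.\ finite, so summing over finite $K$ gives $\warb^T(|K_o \cap T_2| < \infty) = Q_{|E_1|}\lambda^{|E_2|}/Q_k$; expanding $Q_m = \lambda^{m-1}[m-(m-1)\lambda]$ and simplifying yields the rational function in item 2, and also identifies the law of $K_o \cap T_2$ conditioned on $|K_o \cap T_2| < \infty$ with $\p_{p_c}^T$. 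Dividing the joint formula by the marginal formula for $K_o \cap T_2 = K$ shows that $\warb^T(\omega \cap B_1 = \eta_1 \mid K_o \cap T_2 = K)$ does not depend on $K$, which combined with the distributional identification just mentioned gives item 1.

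The main obstacle is the equivalence-class bookkeeping for the joint cylinder. One must check that no class on $\partial B$ straddles $T_1$ and $T_2$ (a consequence of $\tilde{K} \cap K = \emptyset$), and spot the asymmetry that the $o$-class equals $E_1$ in the marginal computation but only has size $n_1$ in the joint (because $E_1 \subseteq B_1$ is not in $\partial B$); this shift from $Q_{|E_1|}$ to $Q_{n_1}$ is exactly what the algebraic identities for $\lambda$ then convert into the critical bond-percolation factor on the $T_2$ side.
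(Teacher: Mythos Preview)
Your proposal is correct and follows essentially the same route as the paper: apply \Cref{lem:big-formula} to the joint cylinder $B_1 \cup B_K$, do the equivalence-class bookkeeping (which you carry out carefully), use the algebraic identity $p(1-p)^{k-2}/Q_{k-1}=p_c(1-p_c)^{k-2}$ together with $|\tilde K|=|E_2|+(k-2)|E(K)|$ to extract the $\p_{p_c}^T$ factor, and sum over finite $K$ using criticality of $p_c$. The only cosmetic difference is that the paper packages the $T_1$-side factor as $\warb^T(\{\omega\cap B_1=\eta_1\}\cap\{E_2\text{ closed}\})$ rather than computing the marginal $\warb^T(K_o\cap T_2=K)$ separately via a second application of \Cref{lem:big-formula} and then dividing; both lead to the same factorisation.
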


\begin{proof}
Let $B_1 \subseteq T_1$ be a finite connected set of edges that contains $E_2$, let $\eta_1 \in \{0,1 \}^{B_1}$ be a configuration on $B_1$, and let $G \subseteq T_2$ be a finite connected subgraph containing $o$. Our first goal is to compute $\warb^T \bra{ \{ \omega \cap {B_1} = \eta_1 \}  \cap \{ K_o \cap T_2 = G \}}$. The event $\{K_o \cap T_2 = G\}$ is the event that the edges in $G$, say $O$, are open, and the edges adjacent to $G$ in $T_2$, say $C$, are closed. So we can write this event as $\{\omega \cap {B_2} = \eta_2\}$, where $B_2 := O \cup C$ and $\eta_2 := O$. In particular, we can write the event $\{ \omega \cap {B_1} = \eta_1 \}  \cap \{ K_o \cap T_2 = G \}$ as $\{\omega \cap B = \eta\}$, where $B:= B_1 \cup B_2$ and $\eta := \eta_1 \cup \eta_2$.

As in the setup for \Cref{lem:big-formula}, the configuration $\eta$ induces the equivalence relation on $\partial B$ in which edges $e$ and $f$ are related if and only if $e^-$ and $f^-$ are connected by a path in $\eta$. None of the edges in $\partial B_1$ are related to any of the edges in $\partial B_2$. Moreover, the edges in $\partial B_2$ are partitioned into $\abs{C}$ equivalence classes, each containing $k-1$ edges. Let $a_1, \dots , a_t$ be the sizes of the equivalence classes containing the edges in $\partial B_1$, including repeats. By \Cref{lem:big-formula},
	\[
		\warb^T \bra{ \omega \cap B = \eta } = \frac{Q_{k-1}^{\abs{C}} \cdot \prod_{i=1}^t Q_{a_i}} {Q_k \cdot Q_{k-1}^{\abs{B}}} \cdot \p^T \bra{ \omega = \eta }.
	\]
By induction on $\abs{O}$, we find $\abs{ C } = \abs{E_2} + (k-2) \cdot \abs{O}$. So we can rewrite the above expression as
\[
	\warb^T \bra{ \omega \cap B = \eta } = \bra{ \frac{ \prod_{i=1}^t Q_{a_i} }{Q_k \cdot Q_{k-1}^{ \abs{B_1} } } \cdot p^{\abs{\eta_1}} ( 1-p)^{\abs{B_1 \backslash \eta_1}+\abs{E_2}}} \cdot \bra{ \frac{p (1-p)^{k-2}}{Q_{k-1}} } ^{\abs{O}}.
\]
By \Cref{lem:big-formula} again, the first term in this product is $\warb^T \bra{ \{\omega \cap B_1 = \eta_1\} \cap \{ E_2 \text{ closed} \} } $. By direct calculation, we miraculously find that the second term in this
 product is $\bra{p_c(1-p_c)^{k-2}}^{\abs{O}}$, which is equal to $\p_{p_c}^T \bra{ K_o \cap T_2 = G } / (1-p_c)^{\abs{E_2}}$. Therefore,
 \begin{equation} \label{eq:unprocessed}
	 \warb^T \bra{ \omega \cap B = \eta } = \frac{\warb^T \bra{ \{ \omega \cap B_1 = \eta_1 \} \cap \{ E_2 \text{ closed} \} } }{(1-p_c)^{\abs{E_2}}} \cdot \p_{p_c}^T \bra{ K_o \cap T_2 = G }.
 \end{equation}
Since $p_c$ is the critical parameter for bond percolation on $T_2$, we know $K_o \cap T_2$ is $\p_{p_c}^T$-almost surely finite \cite[Proposition 5.4]{trees-networks}. Summing \eqref{eq:unprocessed} over all possibilities for $G$, it follows that
\begin{equation} \label{eq:summed}
	\warb^T \bra{ \{\omega \cap B_1 = \eta_1\} \cap \{ \abs{K_o \cap T_2} < \infty \} } = \frac{\warb^T \bra{ \{ \omega \cap B_1 = \eta_1 \} \cap \{ E_2 \text{ closed} \} } }{(1-p_c)^{\abs{E_2}}}.
\end{equation}
Plugging this back into \eqref{eq:unprocessed} and dividing by $\warb^T \bra{ \abs{K_o \cap T_2} < \infty }$ gives
\[
	\warb^T \bra{ \omega \cap B = \eta \mid \abs{ K_o \cap T_2 } < \infty } = \warb^T \bra{ \omega \cap B_1 = \eta_1 \mid \abs{K_o \cap T_2} < \infty } \cdot \p_{p_c}^T \bra{ K_o \cap T_2 = G }.
\]
Since $\eta_1$ and $G$ were arbitrary, this proves item 1. To prove item 2, sum \eqref{eq:summed} over all possibilities for $\eta_1$ given $B_1$ to obtain
\[
	\warb^{T} \bra{ \abs{K_o \cap T_2} < \infty } = \frac{ \warb^T \bra{E_2 \text{ closed}} }{(1-p_c)^{\abs{E_2}}},
\]
then use \Cref{lem:big-formula} to compute
\[
	\frac{ \warb^T \bra{E_2 \text{ closed}} }{(1-p_c)^{\abs{E_2}}} = 
	\frac{Q_{k-\abs{E_2}} \cdot Q_{k-1}^{\abs{E_2}} }{Q_k \cdot Q_{k-1}^{\abs{E_2}} } \cdot \frac{(1-p)^{\abs{E_2}}}{(1-p_c)^{\abs{E_2}}} = \frac{\beta(k-2)\abs{E_1} + \abs{E_2} -1 }{\beta(k-2)k - 1}. \qedhere
\]
\end{proof}

Given a subgraph $H$ of $T$, let $\{o \xleftrightarrow{H} \infty\}$ be the event that $\omega \cap H$ contains an infinite self-avoiding path from $o$. Let $\{ o \xleftrightarrow{H} \infty \} \circ \{o \xleftrightarrow{H} \infty \}$ and $ \{ o \nxleftrightarrow{H} \infty \}  $ be the events that $\omega \cap H$ contains at least two edge-disjoint such paths and no such paths, respectively. When $H = T$, we simply write $\leftrightarrow$ in place of $\xleftrightarrow{H}$. Given an edge $e \in T$, we write $T(e)$ for the subgraph of $T$ induced by $e^-$ and the descendants  of $e^+$.

\begin{cor} \label{cor:finite-infinite} When $\beta \geq \frac{1}{k-2}$,
	\begin{enumerate}[topsep=0pt]
		\item Under $\warb^T \bra{ \; \cdot \; \mid \abs{K_o} < \infty }$, the cluster $K_o$ has the same distribution as it does under $\p_{p_c}^T$;
		\item $\warb^T \bra{ o \leftrightarrow \infty } = \frac{ \beta(k-2)k-k } { \beta(k-2)k-1 };$
		\item $\warb^T \bra{ \{ o \leftrightarrow \infty \} \circ \{ o \leftrightarrow \infty \} } = 0$;
		\item $\warb^T \bra{ o \leftrightarrow \infty \mid o \nxleftrightarrow{T(e)} \infty } = \frac{\beta(k-2)-1}{\beta(k-2)}$ for every edge $e$ adjacent to $o$.
	\end{enumerate}
\end{cor}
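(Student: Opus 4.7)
All four items are consequences of \cref{lem:technical} applied to suitable partitions of the edges adjacent to $o$. The key observation is that since $T_1 \cup T_2 = T$ and $T_1 \cap T_2 = \{o\}$, we have $K_o = (K_o \cap T_1) \cup (K_o \cap T_2)$, so $\{|K_o| < \infty\}$ coincides with $\{|K_o \cap T_1| < \infty\} \cap \{|K_o \cap T_2| < \infty\}$; moreover, the roles of $T_1$ and $T_2$ can be swapped, producing a symmetric version of each formula in \cref{lem:technical}.

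For items 1 and 2, the plan is to take $E_1 = \emptyset$ and $E_2$ equal to all $k$ edges adjacent to $o$, so that $T_2 = T$ and $K_o \cap T_2 = K_o$. Item 1 of \cref{lem:technical} then becomes the statement that $K_o$, conditioned on being finite, is distributed as under $\p_{p_c}^T$. Item 2 of \cref{lem:technical} (with $|E_1| = 0$ and $|E_2| = k$) gives $\warb^T(|K_o| < \infty) = (k-1)/(\beta(k-2)k - 1)$, and item 2 of the corollary follows by taking the complement.

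For item 3, I would fix an edge $e_1$ at $o$ and take $E_1 = \{e_1\}$, $E_2$ equal to the other $k-1$ edges. Item 2 of \cref{lem:technical} applied in both orderings of $(E_1, E_2)$ computes $\warb^T(|K_o \cap T_i| < \infty)$ for $i = 1, 2$, and the joint probability $\warb^T(|K_o \cap T_1| < \infty, |K_o \cap T_2| < \infty) = \warb^T(|K_o| < \infty)$ was already computed above. Inclusion--exclusion then pins down the probability that both are infinite, and the algebra should cancel cleanly to $0$. Since any two edge-disjoint infinite self-avoiding paths from $o$ start with distinct edges, the event $\{o \leftrightarrow \infty\} \circ \{o \leftrightarrow \infty\}$ is contained in $\bigcup_{e_1} \{|K_o \cap T_1| = \infty\} \cap \{|K_o \cap T_2| = \infty\}$, and a union bound over the $k$ choices of $e_1$ gives item 3.

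For item 4, I would set $E_2 = \{e\}$ and $E_1$ equal to the remaining $k-1$ edges, so that $T_2 = T(e)$. Then $\{o \nxleftrightarrow{T(e)} \infty\} = \{|K_o \cap T_2| < \infty\}$, and on this event $\{o \leftrightarrow \infty\} = \{|K_o \cap T_1| = \infty\}$. The argument of item 3, specialised to the same partition, shows that $\warb^T(|K_o \cap T_1| = \infty, |K_o \cap T_2| = \infty) = 0$, hence $\warb^T(|K_o \cap T_1| = \infty) = \warb^T(|K_o \cap T_1| = \infty, |K_o \cap T_2| < \infty)$; the desired conditional probability is therefore the ratio $\warb^T(|K_o \cap T_1| = \infty) / \warb^T(|K_o \cap T_2| < \infty)$, and both quantities are given directly by item 2 of \cref{lem:technical}. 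No part of this is deep; the only subtlety is whether \cref{lem:technical} as stated literally covers the degenerate case $E_1 = \emptyset$ used in items 1 and 2, but if not, those items can equivalently be deduced from the $|E_1| = 1$ case via the same inclusion--exclusion that drives item 3.
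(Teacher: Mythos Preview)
Your proposal is correct and follows essentially the same approach as the paper: items 1 and 2 are exactly the paper's argument (take $E_1=\emptyset$), and item 4 matches the paper's computation. For item 3 the paper decomposes the complement of the disjoint-occurrence event as $\{o\not\leftrightarrow\infty\}\cup\bigcup_e\bigl(\{o\xleftrightarrow{T(e)}\infty\}\cap\bigcap_{f\neq e}\{o\nxleftrightarrow{T(f)}\infty\}\bigr)$ and computes each summand from item~2 of \cref{lem:technical}, whereas you show directly via inclusion--exclusion that $\warb^T(\abs{K_o\cap T_1}=\infty,\,\abs{K_o\cap T_2}=\infty)=0$ for each bipartition; this is the same computation repackaged, and arguably a shade cleaner.
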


\begin{proof}
Items 1 and 2 follow immediately by taking $E_1 = \emptyset$ in items 1 and 2 of \Cref{lem:technical}, respectively. To prove item 3, start by rewriting
\[
	1- \warb^T \bra{ \{ o \leftrightarrow \infty \} \circ \{ o \leftrightarrow \infty \} } = \warb^T \bra{ o \not\leftrightarrow \infty } + \sum_{e: \; e^- = o} \warb^T \bra{ \{ o \xleftrightarrow{ T(e) } \infty \} \cap \bigcap_{\substack{ f: \; f^- = o \\ f \not= e } } {\{ o \nxleftrightarrow{T(f)} \infty \} }}.
\]
By item 2 of \Cref{lem:technical}, for every edge $e$ adjacent to $o$,
\[\begin{split}
	\warb^T \bra{ \{ o \xleftrightarrow{ T(e) } \infty \} \cap \bigcap_{\substack{ f: \; f^- = o \\ f \not= e } } {\{ o \nxleftrightarrow{T(f)} \infty \} }} &= \warb^T \bra{ \bigcap_{\substack{ f: \; f^- = o \\ f \not= e } } {\{ o \nxleftrightarrow{T(f)} \infty \} } } - \warb^T \bra{ o \not\leftrightarrow \infty } \\
	&= \frac{\beta(k-2) + k-2}{\beta(k-2)k-1} - \frac{k-1}{\beta(k-2)k-1} = \frac{\beta(k-2)-1}{\beta(k-2)k-1}.
\end{split}\]
Therefore,
\[
	1-\warb^T \bra{ \{ o \leftrightarrow \infty \} \circ \{ o \leftrightarrow \infty \} } = \frac{k-1}{\beta(k-2)k-1} +k \cdot \frac{\beta(k-2)-1}{\beta(k-2)k-1} = 1.
\]
To prove item 4, given an edge $e$ adjacent to $o$, use the result $\warb^T \bra{ \{ o \leftrightarrow \infty \} \circ \{ o \leftrightarrow \infty \} } = 0$ and \Cref{lem:technical} to compute
\[
	\warb^T \bra{ o \leftrightarrow \infty \mid o \nxleftrightarrow{T(e)} \infty } = \frac{ \warb^T \bra{ o \xleftrightarrow{T \backslash T(e)} \infty } }{ \warb^T \bigg( o \nxleftrightarrow{T(e)} \infty \bigg)} = 
	\frac{\beta(k-2)-1}{\beta(k-2)}. \qedhere
\]
\end{proof}

We now combine our results to prove item 3 of \Cref{thm} and \Cref{cor:stochastic-domination}.

\begin{proof}[Proof of item 3 of \Cref{thm}]
Let $\mathbb Q$ be the distribution of the configuration $\omega$ given by the procedure. For some positive integer $R$, let $B$ be the set of edges $e \in T$ with $d(o,e^+) \leq R$, and for each edge $e \in B$, define $B(e) := T(e) \cap B$. Recall that for every set of edges $E \subseteq T$, we defined $\partial E$ to be the set of edges in $T \backslash E$ that are adjacent to $E$. We write $\partial^2 E$ for $\partial ( E \cup \partial E)$. Define random sets of edges $E_1, E_2, \dots$ by $E_1 := K_o \cap B$, and for each $i \geq 2$, \[E_i := \bigcup_{e \in B \cap \partial^2 E_{i-1}} K(e^-) \cap B(e).\] It suffices to check that for each $i$,
\[
	\mathbb Q \bra{ E_i = \;\cdot \;\mid E_1, \dots , E_{i-1} } = \warb^T \bra{ E_i = \; \cdot \; \mid E_1, \dots , E_{i-1} }.
\]

Start with $i = 1$. By item 2 from \Cref{cor:finite-infinite}, the probability that $K_o$ is finite is the same under $\warb^T$ and $\mathbb Q$. By item 1 from \Cref{cor:finite-infinite}, the conditional distribution of $K_o$ given that $K_o$ is finite is the same under $\warb^T$ and $\mathbb Q$. By item 3 from \Cref{cor:finite-infinite}, on the event that $K_o$ is infinite, there is $\warb^T$-almost surely a unique infinite self-avoiding path $P$ in $\omega$ from $o$. By symmetry, since $\warb^T$ is invariant under any graph automorphism of $T$, this path $P$ is distributed uniformly and hence has the same law as $\gamma(o)$ from the procedure. Let $I$ be the edges of a length-$R$ self-avoiding path from $o$. Let $\mathcal E$ be the event that $K_o$ is infinite and the first $R$ edges in $P$ are the edges in $I$. It suffices to check that the conditional distribution of $(K_o \cap B) \backslash I = \bigcup_{e \in B \cap \partial I} K(e^-) \cap B(e)$ given $\mathcal E$ is the same under $\warb^T$ and $\mathbb Q$. Since $T$ is vertex-transitive, the result $\warb^T \bra{ \{o \leftrightarrow \infty \} \circ \{ o \leftrightarrow \infty \} } = 0$ holds with $o$ replaced by any other given vertex. In particular, on $\mathcal E$, the set $ K(e^-) \cap B(e) $ is finite for every edge $e \in B \cap \partial I$, $\warb^T$-almost surely. So by item 1 from \Cref{lem:technical}, given $\mathcal E$, the sets $\{ K(e^-) \cap B(e) \}_{e \in B \cap \partial I}$ are distributed under $\warb^T$ as they are under $\p_{p_c}^T$ and hence under $\mathbb Q$.

Now take $i \geq 2$. Let $\mathcal A$ be the event that $(E_1, \dots, E_{i-1})$ assumes a particular outcome $(\overline E_1, \dots, \overline E_{i-1})$. On $\mathcal A$, no edge in $B \cap \partial \overline E_{i-1}$ belongs to $\omega$. So by item 1 of \Cref{lem:technical}, given $\mathcal A$, the sets $\{ K(e^+) \cap (B(e) \backslash \{ e \})  \}_{ e \in B \cap \partial \overline E_{i-1} }$ are distributed independently under $\warb^T$, which is also the case under $\mathbb Q$. By item 2 of \Cref{cor:finite-infinite}, given $\mathcal A$, for every edge $e \in B \cap \partial \overline E_{i-1}$, the set $K(e^+) \cap (B(e) \backslash \{ e \})$ has the same probability of being finite under $\warb^T$ and $\mathbb Q$, and arguing similarly to the case $i = 1$, \Cref{lem:technical} and \Cref{cor:finite-infinite} guarantee that when conditioned to be finite or infinite, this set has the same distribution under $\warb^T$ and $\mathbb Q$. Therefore, given $\mathcal A$, $E_{i-1} = \bigcup_{ e \in B \cap \partial E_{i-1} } K(e^+) \cap (B(e) \backslash \{ e \})$ has the same distribution under $\warb^T$ and $\mathbb Q$.
\end{proof}

\begin{proof}[Proof of \Cref{cor:stochastic-domination}]
Item 1 follows from item 3 of \Cref{thm} because there is a monotone coupling of the bond percolation measures $(\p_p^T)_{p \in [0,1]}$ \cite[Theorem 2.1]{percolation-book}. For item 2, we explicitly modify the procedure from item 3 of \Cref{thm} to construct a monotone coupling $(\omega_{\text{open}}, \omega_{\text{closed}})$ of $(\warb^T \bra{ \; \cdot \; \mid e \in \omega }, \warb^T \bra{\; \cdot \; \mid e \not \in \omega})$. By automorphism-invariance, we can assume $e$ is adjacent to $o$, say $e = \{ o , u \}$.

Sample $\omega_0$ according to $\p_{p_c}^{T\backslash \{e\}}$. Sample a configuration $\eta$ on the vertices of $T$ except $\{o,u\}$ such that each vertex is independently included with probability $\frac{\beta(k-1)-1}{\beta(k-2)}$. Let $U$ be the set of vertices $v \in \eta$ such that there is an edge $f \in T \backslash (\omega \cup \{ e \})$ with $v = f^+$. For each vertex $v \in T$, independently select a neighbour $s(v)$ with $d(e,s(v)) > d(e,v)$ uniformly at random. Let $\gamma(v)$ denote the edges in the path $v,s(v),s(s(v)),\dots$. Let $A_{\text{closed}}$ be a random subset of $\{ o,u \}$ such that each vertex is independently included with probability $\warb^T (o \xleftrightarrow{T \backslash T(e)} \infty \mid e \not \in \omega)$. Let $A_{\text{open}}$ be a random subset of $\{ o, u\}$ containing at most one vertex such that each vertex is included with probability $\warb^T ( o \xleftrightarrow{T \backslash T(e)} \infty \mid e \in \omega)$. By item 3 of \Cref{thm}, $A_{\text{open}}$ is well-defined, and the configurations of the edges in $T \backslash \{ e \}$ given by
\[ \begin{split}
	\omega_{\text{open}} := \omega_0 \cup \bigcup_{v \in U \cup A_{\text{open}} } \gamma(v) \quad\quad \text{and} \quad\quad	\omega_{\text{closed}} := \omega_0 \cup \bigcup_{v \in U \cup A_{\text{closed}}} \gamma(v)\end{split}\]
are distributed according to $\warb^T \bra{ \; \cdot \; \mid e \in \omega }$ and $\warb^T \bra{\; \cdot \; \mid e \not \in \omega}$, respectively.

We automatically have $\p \bra{ u \in A_\text{open} \mid o \in A_{\text{open}} } \leq \p \bra{ u \in A_{\text{closed}} }$. Using \Cref{thm}, we find by direct calculation that $\p \bra{ u \in A_{\text{open}} \mid o \not\in A_{\text{open}} } = \p \bra{ u \in A_{\text{closed}}}$ and $\p \bra{ o \in A_{\text{open}} } \leq \p \bra{ o \in A_{ \text{closed} } }$. So we can couple $A_{\text{open}}$ and $A_{\text{closed}}$ with $A_{\text{open}} \subseteq A_{\text{closed}}$ almost surely. In particular, we can couple $\omega_{\text{open}}$ and $\omega_{\text{closed}}$ with $\omega_\text{open} \subseteq \omega_\text{closed}$ almost surely.
\end{proof}

\section*{Acknowledgements}
We thank Tom Hutchcroft for suggesting this model and for many valuable comments. We also thank Roland Bauerschmidt for helpful feedback on our presentation of an earlier version of these results. We were supported by an EPSRC DTP grant.

\bibliographystyle{ieeetr}
\bibliography{references}

\end{document}